\makeatletter \thm@headfont{\bfseries\scshape} \makeatother
\newtheorem{thm}{Theorem}
\newtheorem{lem}[thm]{Lemma}
\newtheorem{cor}[thm]{Corollary}
\newtheorem{rem}[thm]{Remark}
\begin{document}\parindent0pt

\title[Non-uniform distribution in the $3x+1$ orbits]{A non-uniform distribution property of most orbits, in case the $3x+1$ conjecture is true}

\author[A. THOMAS]
       {Alain THOMAS\textsuperscript{\dag{}}
       }
\thanks{\textsuperscript{\dag{}} This author is partially supported by Aix-Marseille University, I2M, Marseille, France}
\address{{\bf {Alain THOMAS}}\\
448 all\'ee des Cantons\\
83640 Plan d'Aups Sainte Baume\\
FRANCE
}
\email{alain-yves.thomas@laposte.net}

\keywords{$3x+1$ problem, Collatz problem, Hasse problem, Syracuse problem}

\subjclass{11B37 (Recurrences), 11A99 (Elementary number theory), 11B83 (Special sequences and polynomials)}

\begin{abstract}
Let $T(n)=\left\{\begin{array}{ll}3n+1&(n\hbox{ odd})\\\frac n2&(n\hbox{ even})\end{array}\right.$ ($n\in\mathbb Z$). We call ``the orbit of the integer $n$", the set
$$
\mathcal O_n:=\{m\in\mathbb Z\;:\;\exists k\ge0,\ m=T^k(n)\}
$$
and we put $c_i(n):=\#\{m\in\mathcal O_n\;:\;m\equiv i\hbox{ mod.}18\}$. Let $W$ be the set of the integers whose orbit contains $1$ and is, in the following sense, approximately well distributed modulo $18$ between the six elements of the set $I:=\{1,5,7,11,13,17\}$ (the elements of \{1,\dots,18\} that are odd and not divisible by $3$). More precisely:
$$
W:=\Big\{n\in\mathbb Z\;:\;\exists k\ge0,\ T^k(n)=1\hbox{ and }\forall i\in I,\ \frac{c_i(n)}{\sum_{i\in I}c_i(n)}\le\frac16+0.0215\Big\}.
$$
We prove that $W\cap\mathbb N$ has density $0$ in $\mathbb N$. Consequently, if the $3x+1$ conjecture is true, most of the positive integers $n$ satisfy
$$
\frac{\max_{i\in I}c_i(n)}{\sum_{i\in I}c_i(n)}>\frac16+0.0215.
$$
\end{abstract} 

\maketitle

\medskip

\begin{centerline}%
{\dedicatory\textsl{Dedicated to the memory of Pierre Liardet}}
\end{centerline}

\section{Introduction}\label{sec:one}

As it can bee seen in one example given by Lagarias, if we chose a large integer (for instance the one of figure 2 in \href{http://www.ams.org/bookstore/pspdf/mbk-78-prev.pdf}{http://www.ams.org/bookstore/pspdf/mbk-78-prev.pdf}), in general its orbit  under the transform $T:=n\mapsto\left\{\begin{array}{ll}3n+1&(n\hbox{ odd})\\\frac n2&(n\hbox{ even})\end{array}\right.$ contains about two times less odd numbers than even numbers, due to the fact that $3n+1$ is even for any odd $n$. This figure shows that the orbit of the integer $100\lfloor\pi\cdot10^{35}\rfloor$ has length about $900$ and, as expected, about $300$ odd and $600$ even elements, because $100\lfloor\pi\cdot10^{35}\rfloor\cdot\frac{3^{300}}{2^{600}}\approx1$. Fortunately, the method we use to prove the following theorem can't be used to contradict this property.

\begin{thm}\label{thetheor}
We put
$$
\mathcal O_n:=\{m\in\mathbb Z\;:\;\exists k\ge0,\ m=T^k(n)\}\quad(\hbox{orbit of the integer }n),
$$
$$
c_i(n):=\#\{m\in\mathcal O_n\;:\;m\equiv i\hbox{ mod.}18\}\quad(\hbox{finite or infinite}),
$$
$$
I:=\{1,5,7,11,13,17\},
$$
$$
W:=\Big\{n\in\mathbb Z\;:\;\exists k\ge0,\ T^k(n)=1\hbox{ and }\forall i\in I,\ \frac{c_i(n)}{\sum_{i\in I}c_i(n)}\le\frac16+0.0215\Big\}.
$$
We have for any $N$ large enough
$$
\#W\cap\{1,\dots,N\}\le N^{0.9999}.
$$
\end{thm}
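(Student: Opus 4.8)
The plan is to encode each element of $W$ by the sequence of $2$-adic valuations along its Syracuse trajectory, turn the near‑uniformity hypothesis into a combinatorial constraint on that sequence, and count.

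First I would reduce to odd integers prime to $3$. Since $c_i(2m)=c_i(m)$ for every $i\in I$ (the $i\in I$ being odd), $W$ is stable under multiplication and division by $2$, so $\#(W\cap\{1,\dots,N\})$ is at most $(1+\log_2N)$ times the number of odd elements of $W$ below $N$; and if $m$ is odd with $3\mid m$ then $T(m)=3m+1$ is prime to $3$, the counts $c_i$ are unchanged along the orbit up to its first odd term $m'$ prime to $3$, and each such $m'$ has at most $\log_2(3m)$ preimages $m$ of this shape below $N$. Hence it is enough to bound $\#W'$, with $N'=3N+1$ and $W'$ the set of $n\le N'$ that are odd, prime to $3$, reach $1$, and satisfy the distribution bound; I want $\#W'\le N'^{\,0.9998}$. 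For $n\in W'$ write the Syracuse trajectory $n=n_0\to\cdots\to n_\ell=1$ with $n_t=(3n_{t-1}+1)/2^{j_t}$, $j_t\ge1$, $\ell$ minimal, and $J:=j_1+\cdots+j_\ell$. The identity $3^{\ell}n=2^{J}-\sum_{t=1}^{\ell}3^{\ell-t}2^{j_1+\cdots+j_{t-1}}$ shows $(j_1,\dots,j_\ell)$ determines $n$ injectively, and comparing sizes (the error being $O(\log\ell)$ since the $n_{t-1}$ are distinct) gives $0\le J-\ell\log_23-\log_2n\le C\log\ell$, so $n\le N'$ forces $J\le\ell\log_23+\log_2N'+C\log\ell$.

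The arithmetic heart is the behaviour modulo $18=2\cdot3^{2}$. With $r_t:=j_t\bmod6$ one has $n_t\equiv5^{r_t}(3n_{t-1}+1)\pmod 9$ (because $2^{-1}\equiv5$ generates $(\mathbb Z/9)^{*}$). Since $3m+1\bmod 9\in\{4,7\}$ is a nonzero square mod $9$ whenever $3\nmid m$, the residue $n_t\bmod9$ is a square---equivalently $n_t\in\{1,7,13\}\pmod{18}$---iff $r_t$ is even; so the parity of $r_{t-1}$ is determined by $n_{t-1}\bmod 18$, and given that parity the map $r_t\mapsto n_t\bmod18$ is a bijection onto $I$. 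Thus the residue sequence $(n_0,\dots,n_\ell)\bmod 18$ and the vector $(r_1,\dots,r_\ell)$ determine one another (given $n_0\bmod18$), and each transition $n_{t-1}\to n_t$ forces $j_t\ge\tilde r_t$, the least positive residue $\equiv r_t\pmod 6$, where $\tilde r_t$ is an explicit function of $(n_{t-1},n_t)\bmod18$ equal to $1$ on exactly two of the twelve transitions and up to $6$ on others. Therefore $J\ge R:=\sum_t\tilde r_t$, and I would lower‑bound $R$ by minimizing $\sum_{\text{transitions}}\tilde r(n_{t-1},n_t)$ over all transition distributions whose state frequencies $c_i$ satisfy $c_i\le(\frac16+0.0215)\ell$: a transportation‑type linear program whose value is $\kappa\ell$ with $\kappa>\log_23$. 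Combined with the size bound this yields $(\kappa-\log_23)\ell\le\log_2N'+C\log\ell$, i.e. $\ell\le C_1\log_2N'$; and writing $j_t=\tilde r_t+6q_t$, $q_t\ge0$, the same inequalities give $q_1+\cdots+q_\ell\le\frac16\log_2N'+C\log\ell$.

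Finally I would count. The residue sequences that can occur for an element of $W'$ (near‑uniform, with $R\le\ell\log_23+\log_2N'$) form a set $\mathcal G_\ell\subseteq I^{\ell}$ of size $\le\theta^{\ell}6^{\ell}$ for some $\theta<1$: a generic near‑uniform sequence in $I^\ell$ has $R$ close to its generic value, which overshoots the budget once $\ell$ exceeds a suitable multiple of $\log_2N'$, so the admissible sequences are exponentially thin. For each $\tau\in\mathcal G_\ell$ the integer $n$ is determined by $(q_1,\dots,q_\ell)$, at most $\binom{\ell+\frac16\log_2N'+C\log\ell}{\ell}$ of which give $n\le N'$; hence $\#W'\le\sum_{\ell\le C_1\log_2N'}\theta^{\ell}6^{\ell}\binom{\ell+\frac16\log_2N'+C\log\ell}{\ell}$, and I expect the tolerance $0.0215$ to be calibrated exactly so that $\kappa$, $C_1$, $\theta$ interlock and a Stirling estimate of this sum stays below $N'^{\,0.9998}$. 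The main obstacle is this last balancing: the linear‑programming bound $\kappa$, the exponential saving $\theta$ for admissible codings, and the entropy count of the $q_t$ each only barely suffice, which is why neither the tolerance can be much larger nor the exponent pushed much below $1$.
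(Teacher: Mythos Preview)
Your encoding is essentially the paper's: writing $j_t=\tilde r_t+6q_t$ and recording the residue sequence $(n_t\bmod18)$ is exactly the paper's parametrisation $\mathfrak n(i_1,\dots,i_\alpha)$ in disguise, with your $q_t$ equal to $i'_t-1$ and your residue sequence equivalent to the parity pattern $A=\{j:i_j\text{ odd}\}$. So the reduction and the injective coding are the same.

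The gap is in the counting. Your inequality $\#W'\le\sum_{\ell}\theta^{\ell}6^{\ell}\binom{\ell+\frac16\log_2 N'}{\ell}$ is useless unless $\theta$ is small enough and is computed: with $\theta=1$ the dominant term, at $\ell\approx C_1\log_2 N'$ with $C_1=1/(\kappa-\log_23)\approx 2.9$, is of order $6^{\ell}\approx N^{C_1\log_2 6}\approx N^{7.5}$, far above $N^{1}$. Near-uniformity alone gives no exponential saving (the uniform distribution already satisfies $c_i\le(\tfrac16+0.0215)\ell$), so your $\theta<1$ would have to come from the joint constraint ``near-uniform \emph{and} $R$ small'', which is a genuine large-deviations computation you have not performed. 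Your closing sentence concedes this: you ``expect'' the constants to interlock but do not show it.

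The paper avoids this entirely by a decoupling trick. Instead of an LP over transitions, it bounds $\sum_j\varepsilon_j\ge 3\sum_j i_j-c(n)+\alpha'$ where $c(n)=2c_1+c_5+3c_{11}+2c_{13}+3c_{17}$ depends only on the state counts, and then uses the $W$-hypothesis to bound $c(n)\le 11(\tfrac16+0.0215)(\alpha+1)$ \emph{uniformly}, independently of the parity pattern $A$. This lets it count the pairs $(A,(i'_j))$ crudely by $\binom{\alpha}{\alpha'}\binom{\alpha+\alpha'''}{\alpha}$, a single trinomial $\frac{(\alpha'+\alpha''+\alpha''')!}{\alpha'!\,\alpha''!\,\alpha'''!}$ with $\alpha'''$ an explicit affine function of $(\alpha',\alpha'',\log N)$. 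Maximising this over $(\alpha',\alpha'')$ is a two-variable calculus problem (the paper's Lemma~6 and Appendix~B), and a numerical evaluation gives the exponent $w_0<0.9999$. Your transition LP is in principle sharper, but to cash it in you would need to replace the crude $6^{\ell}$ by a tight count of admissible residue sequences, and then carry out an optimisation at least as delicate as the paper's --- neither of which you do.
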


Of course this theorem remains true if we replace the condition $T^k(n)=1$ by $T^k(n)=n_0$, where $n_0\in\mathbb Z\setminus\{0\}$ is fixed. In case $n_0<0$ we replace the interval $\{1,\dots,N\}$ by $\{-N,\dots,-1\}$.

To prove this theorem we use the same method as Krasilov and Lagarias \cite{Kr}, it consists in describing the set of the antecedents of $1$ by the powers of $T$. See also \cite{C,C2,Ko,Lbiblio,LbiblioII,Lbook,M}.

\begin{rem}To give a numerical example we consider the orbit of each of the integers $n\in\{1,\dots,26\}$ and we compute $c_i:=\sum_{n=1}^{26}c_i(n)$:
$$
(c_1,\dots,c_{18})=(28,41,5,49,22,4,5,37,2,23,10,2,11,4,1,47,13,1).
$$
As expected, $\sum_{i{\rm\ odd}}c_i=97$ is close to the half of $\sum_{i{\rm\ even}}c_i=208$. Among the $c_i$ with $i$ odd, $c_7=5$ is smaller than $c_1=28$, $c_5=22$, $c_{11}=10$, $c_{13}=11$ and $c_{17}=13$. The proof of the theorem allows to see, in the general case when $n\in\{1,\dots,N\}$, why $c_7$ is smaller than $c_1$, $c_5$, $c_{11}$, $c_{13}$, $c_{17}$. On the other hand the $c_i$ for $i$ a multiple of $3$ are small for an obvious reason: $\frac{3n+1}{2^k}$ is never a multiple of $3$.
\end{rem}

\section{The notations we use to describe the set of the antecedents of $1$}\label{sec:two}

Instead of $T$ we use the transform defined by Sinai in \cite{S}, that we call $S$:
$$
\begin{array}{l}S:\sqcap\to\sqcap\\
\sqcap:=\{n\in\mathbb Z\;:\;n\hbox{ odd and }n\not\in3\mathbb Z\}=\{1,5,7,11,13,17\}+18\mathbb Z\\
\displaystyle S(n):=\frac{3n+1}{2^k},\ k\in\mathbb N.\end{array}
$$
The antecedents of $1$ by $S$ are the integers
\begin{equation}\label{firstinteger}
n_1=\frac13(2^{\varepsilon_1}-1)
\end{equation}
that belong to $\sqcap$; this is equivalent to $\varepsilon_1\in\{2,4\}$ mod.$6$. Let now $n_\alpha$,$n_{\alpha-1}$, \dots, $n_1$ be some integers such that
$$
n_\alpha\ \xrightarrow{S}\ n_{\alpha-1}\ \xrightarrow{S}\ \dots\ \xrightarrow{S}\ n_1\ \xrightarrow{S}\ n_0:=1.
$$
For any $0\le j<\alpha$ there exists $\varepsilon_{j+1}\in\mathbb N$ such that
\begin{equation}\label{integer}
n_{j+1}=\frac13\left(2^{\varepsilon_{j+1}}n_j-1\right).
\end{equation}
One has $n_{j+1}\in\sqcap$, and this is equivalent to $2^{\varepsilon_{j+1}}n_j-1\in3\mathbb N\setminus9\mathbb N$. This means that, when we know the value of $n_j$, or equivalently when we know $\varepsilon_1,\dots,\varepsilon_j$, the positive integer $\varepsilon_{j+1}$ must satisfy the conditions:
\begin{equation}\label{cases}
\begin{array}{ll}
\hbox{if }n_j\equiv 1\hbox{ mod.}18,&\varepsilon_{j+1}\in\{2,4\}\hbox{ mod.}6\\
\hbox{if }n_j\equiv 5\hbox{ mod.}18,&\varepsilon_{j+1}\in\{3,5\}\hbox{ mod.}6\\
\hbox{if }n_j\equiv 7\hbox{ mod.}18,&\varepsilon_{j+1}\in\{4,6\}\hbox{ mod.}6\\
\hbox{if }n_j\equiv 11\hbox{ mod.}18,&\varepsilon_{j+1}\in\{1,3\}\hbox{ mod.}6\\
\hbox{if }n_j\equiv 13\hbox{ mod.}18,&\varepsilon_{j+1}\in\{2,6\}\hbox{ mod.}6\\
\hbox{if }n_j\equiv 17\hbox{ mod.}18,&\varepsilon_{j+1}\in\{1,5\}\hbox{ mod.}6
\end{array}
\end{equation}
(notice that the case $n_j\equiv 7\hbox{ mod.}18$ gives the largest values: $\varepsilon_{j+1}\ge4$ and $n_{j+1}\ge\frac13(16n_j-1)$). So all the antecedents of $1$ by $S^\alpha$ are obtained by the following formula, subject to the conditions (\ref{cases}):
\begin{equation}\label{n}
n_\alpha=\frac1{3^\alpha}\left(2^{\varepsilon_1+\dots+\varepsilon_\alpha}-2^{\varepsilon_2+\dots+\varepsilon_\alpha}3^0-\dots-2^{\varepsilon_\alpha}3^{\alpha-2}-2^03^{\alpha-1}\right).
\end{equation}

We give a first estimation of $n_\alpha$:

\begin{lem}\label{notmain}
If $\alpha\ge2$ and $\varepsilon_1\ne2$,
$$
\frac{2^{\varepsilon_1+\dots+\varepsilon_\alpha}}{\alpha3^\alpha}\le n_\alpha\le\frac{2^{\varepsilon_1+\dots+\varepsilon_\alpha}}{3^\alpha}.
$$
\end{lem}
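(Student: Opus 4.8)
The plan is to convert the one-step relation (\ref{integer})---which after rearranging reads $3n_j+1=2^{\varepsilon_j}n_{j-1}$ for $1\le j\le\alpha$---into a telescoping product; a naive termwise estimate of the alternating sum in (\ref{n}) is too lossy for the lower bound. Put $E:=\varepsilon_1+\dots+\varepsilon_\alpha$ and multiply the identities $\dfrac{3n_j}{3n_j+1}=\dfrac{3n_j}{2^{\varepsilon_j}n_{j-1}}$ over $j=1,\dots,\alpha$: the factors $n_1,\dots,n_{\alpha-1}$ cancel and $n_0=1$, giving
$$
\frac{3^\alpha n_\alpha}{2^{E}}=\prod_{j=1}^{\alpha}\frac{3n_j}{3n_j+1}.
$$
Since each $n_j$ is a positive integer, every factor lies in $(0,1)$, so the right-hand side is $<1$; this is already the upper bound $n_\alpha\le 2^{E}/3^\alpha$, and it needs nothing beyond $\alpha\ge1$.

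For the lower bound I first check that $n_0,n_1,\dots,n_\alpha$ are pairwise distinct. If $n_j=1$ for some $j\ge1$, then $2^{\varepsilon_j}n_{j-1}=4$ with $n_{j-1}$ odd forces $n_{j-1}=1$, and descending we reach $n_1=1$, i.e.\ $2^{\varepsilon_1}=4$, contradicting $\varepsilon_1\ne2$; hence $n_m\ne1$ for all $m\ge1$. And if $n_i=n_j$ with $i<j$, applying $S^{\,i}$ yields $1=S^{\,i}(n_i)=S^{\,i}(n_j)=n_{j-i}$ with $j-i\ge1$, impossible. So $n_1,\dots,n_\alpha$ are $\alpha$ distinct odd integers, each $\ge5$ (they lie in $\sqcap$ and are $\ne1$); ordering them as $n_{(1)}<\dots<n_{(\alpha)}$ gives $n_{(k)}\ge 2k+3$ for $1\le k\le\alpha$.

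Since $x\mapsto\frac{3x}{3x+1}$ is increasing, substituting this into the product gives
$$
\frac{3^\alpha n_\alpha}{2^{E}}=\prod_{k=1}^{\alpha}\frac{3n_{(k)}}{3n_{(k)}+1}\ \ge\ \prod_{k=1}^{\alpha}\frac{6k+9}{6k+10}\ \ge\ \prod_{k=1}^{\alpha}\frac{k+1}{k+2}\ =\ \frac{2}{\alpha+2}\ \ge\ \frac1\alpha ,
$$
where $\frac{6k+9}{6k+10}\ge\frac{k+1}{k+2}$ reduces to $5k+8\ge0$ after cross-multiplication, the next product telescopes, and $\frac{2}{\alpha+2}\ge\frac1\alpha$ holds exactly because $\alpha\ge2$. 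This is the desired $n_\alpha\ge 2^{E}/(\alpha3^\alpha)$.

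The only step with real content is the telescoping identity together with the observation that \emph{distinctness} of the $n_j$ is precisely what prevents the product from collapsing: were some small value repeated many times, the product would decay geometrically in $\alpha$, far below $1/\alpha$. The hypotheses enter only minimally---$\alpha\ge1$ for the upper bound, and $\varepsilon_1\ne2$ solely to guarantee $n_m\ne1$ for $m\ge1$ (hence the distinctness and $n_{(k)}\ge 2k+3$); this latter hypothesis cannot be dropped, since $\varepsilon_1=\varepsilon_2=2$ gives $\alpha=2$, $E=4$, $n_2=1<16/18=2^{E}/(\alpha3^\alpha)$.
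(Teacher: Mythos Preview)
Your proof is correct and rests on the same two pillars as the paper's: the telescoping identity $\dfrac{3^\alpha n_\alpha}{2^{E}}=\prod_{j=1}^{\alpha}\dfrac{3n_j}{3n_j+1}$ (the paper writes this in exponential form via $3n+1=3^{1+\alpha_n}n$) and the distinctness of $n_1,\dots,n_\alpha$, which both you and the paper derive from $\varepsilon_1\ne2$.

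Where you diverge is in the final estimate. The paper bounds $\log\bigl(1+\tfrac1{3n_j}\bigr)\le\tfrac1{3n_j}$, uses only that the $n_j$ are distinct \emph{positive integers} to get $\sum 1/n_j\le H_\alpha\le1+\log\alpha$, and then checks the numerical inequality $3^{(1+\log\alpha)/3}\le\alpha$ for $\alpha\ge2$. You instead exploit the sharper information that the $n_j$ are distinct \emph{odd} integers $\ge5$, substitute $n_{(k)}\ge2k+3$ directly into the product, and reduce to the telescoping $\prod\frac{k+1}{k+2}=\frac{2}{\alpha+2}\ge\frac1\alpha$. Your route is more elementary---no logarithms, no harmonic sum, no numerical verification---and the bound it produces is in fact a bit tighter along the way; the paper's route is shorter to write down once one accepts the final numerical check. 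Both arrive at exactly the same conclusion.
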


\begin{proof}
The upper bound is an immediate consequence of (\ref{n}). The lower bound can be deduced from the straightforward equality:
\begin{equation}\label{equality}
3n+1=3^{1+\alpha_n}n\quad\hbox{with}\quad\alpha_n=\frac1{\log3}\log\Big(1+\frac1{3n}\Big)\le\frac1{3n}.
\end{equation}
Indeed (\ref{integer}) and (\ref{equality}) imply
$$
n_j\le\frac{3^{1+\frac1{3n_{j+1}}}}{2^{\varepsilon_{j+1}}}n_{j+1}
$$
hence
$$
n_0\le\frac{3^{\alpha+\frac1{3n_1}+\dots+\frac1{3n_\alpha}}}{2^{\varepsilon_1+\dots+\varepsilon_\alpha}}n_\alpha.
$$
Now the $n_j$ are distinct (no cycle between $n_\alpha$ and $1$, because (\ref{firstinteger}) and the hypothesis $\varepsilon_1\ne2$ imply $n_1\ne1$), and consequently $\frac1{n_1}+\dots+\frac1{n_\alpha}\le\frac11+\dots+\frac1\alpha\le1+\log\alpha$. If $\alpha\ge2$, the inequality $3^{\frac1{3n_1}+\dots+\frac1{3n_\alpha}}\le\alpha$ and the lemma follow.
\end{proof}

Here we give a indexation and a new lower bound for of $n_\alpha$:

\begin{lem}\label{main}

There exists a one-to-one map
$$
\frak n:(\mathbb N\setminus\{1\})\times\mathbb N^{\alpha-1}\leftrightarrow\sqcap_\alpha:=\{n\in\sqcap\;:\;S^\alpha(n)=1\ne S^{\alpha-1}(n)\}
$$
such that -- for any $(i_1,\dots,i_\alpha)\in\mathbb N\setminus\{1\})\times\mathbb N^{\alpha-1}$
\begin{equation}\label{lb}
\begin{array}{l}\displaystyle\frak n(i_1,\dots,i_\alpha)\ge\frac{2^{3(i_1+\dots+i_\alpha)-c(\frak n(i_1,\dots,i_{\alpha-1}))+\alpha'(i_1,\dots,i_{\alpha})}}{\alpha3^\alpha},\hbox{ where}\\
c(n):=2c_1(n)+c_5(n)+3c_{11}(n)+2c_{13}(n)+3c_{17}(n)\quad(n\in\mathbb Z),\\\alpha'(i_1,\dots,i_{\alpha}):=\#\{1\le j\le\alpha\;:\;i_j\hbox{ odd}\}.\end{array}
\end{equation}
\end{lem}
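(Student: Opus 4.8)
The plan is to build the bijection $\frak n$ by choosing, for each node $n_j$ along the chain, the $i_{j+1}$-th admissible exponent $\varepsilon_{j+1}$ from the relevant list in (\ref{cases}), and then to track carefully how the exponent sum $\varepsilon_1+\dots+\varepsilon_\alpha$ compares with $3(i_1+\dots+i_\alpha)$. First I would observe that in each of the six cases of (\ref{cases}), the admissible residues mod $6$ come in pairs of the form $\{r,r'\}$ with $r'-r\in\{2,4\}$, so the increasing enumeration of admissible exponents $\varepsilon^{(1)}<\varepsilon^{(2)}<\varepsilon^{(3)}<\dots$ satisfies $\varepsilon^{(i)}=3i-\delta_i$ for a correction $\delta_i$ that depends only on $i\bmod 2$ and on the residue class of $n_j$ mod $18$; more precisely $\varepsilon^{(i)}=3i+O(1)$ with the $O(1)$ term alternating with the parity of $i$. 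The index set is $(\mathbb N\setminus\{1\})\times\mathbb N^{\alpha-1}$ rather than $\mathbb N^\alpha$ precisely because (\ref{firstinteger}) forces $\varepsilon_1\in\{2,4\}\bmod 6$ with $\varepsilon_1\ne 2$ excluded when we want $n_1\ne 1$ (so the first coordinate starts at $2$); this also makes $\frak n$ well-defined into $\sqcap_\alpha$ and matches Lemma \ref{notmain}, whose hypothesis $\varepsilon_1\ne2$ is thereby automatic. Bijectivity is then immediate: the data $(i_1,\dots,i_\alpha)$ determines $\varepsilon_1,\dots,\varepsilon_\alpha$ step by step via (\ref{cases}), and conversely from $n\in\sqcap_\alpha$ the chain $n=n_\alpha\to\dots\to n_1\to1$ and hence the $\varepsilon_j$ are uniquely recovered, each $\varepsilon_j$ occupying a well-defined position $i_j$ in its admissible list.

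Next I would convert the multiplicative bound of Lemma \ref{notmain} into the claimed exponential form. By Lemma \ref{notmain}, $n_\alpha\ge 2^{\varepsilon_1+\dots+\varepsilon_\alpha}/(\alpha 3^\alpha)$, so it suffices to show
$$
\varepsilon_1+\dots+\varepsilon_\alpha\ \ge\ 3(i_1+\dots+i_\alpha)-c\big(\frak n(i_1,\dots,i_{\alpha-1})\big)+\alpha'(i_1,\dots,i_\alpha).
$$
Writing $\varepsilon_{j+1}=3i_{j+1}-\delta_{j+1}$ with $\delta_{j+1}$ the correction attached to the pair $(n_j\bmod 18,\ i_{j+1}\bmod 2)$, the sum telescopes to $3\sum i_j-\sum\delta_j$, so the lemma reduces to the pointwise bookkeeping identity/inequality
$$
\sum_{j=1}^{\alpha}\delta_j\ \le\ c\big(\frak n(i_1,\dots,i_{\alpha-1})\big)-\alpha'(i_1,\dots,i_\alpha).
$$
Here is where the weights $2,1,3,2,3$ on $c_1,c_5,c_{11},c_{13},c_{17}$ in the definition of $c(n)$ enter: reading off (\ref{cases}), when $n_j\equiv 1$ the pair is $\{2,4\}$ so the ``even-index'' correction is $3\cdot2-4=2$; when $n_j\equiv 5$ the pair is $\{3,5\}$ giving $3\cdot2-5=1$; for $11$ the pair $\{1,3\}$ gives $3\cdot1-1=2$ but shifted — one must be careful about which index is ``odd'' vs ``even'' — and similarly $13\mapsto\{2,6\}$, $17\mapsto\{1,5\}$, $7\mapsto\{4,6\}$. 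Summing the $\delta_j$ groups the contributions by the residue class of $n_j\bmod 18$, i.e. by how many of the $n_1,\dots,n_{\alpha}$ (equivalently, since $n_0=1$ and $n_\alpha$ is excluded, how many of $n_0,\dots,n_{\alpha-1}$) lie in each class; that count is exactly $c_i(\frak n(i_1,\dots,i_{\alpha-1}))$ for $i\in\{1,5,7,11,13,17\}$. The residue class $7$ contributes nothing extra beyond the ``$3i$'' part (its pair $\{4,6\}$ has gap $2$ with no favorable correction, consistent with the parenthetical remark after (\ref{cases}) that $\varepsilon\ge4$ there), which is why $c_7$ does not appear in $c(n)$. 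The term $-\alpha'$ accounts for the parity-dependent half of each correction: for odd index $i_{j+1}$ one loses one unit of correction relative to the even case, uniformly across the classes, so the odd-index steps collectively subtract $\alpha'$.

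The main obstacle, and the step I would spend the most care on, is the exact bookkeeping in the previous paragraph: getting the six case-by-case corrections right, pinning down the off-by-one in ``which admissible residue is hit at index $i$'' (this depends on whether the smaller element of the mod-$6$ pair is $\le$ the natural starting point), and checking that summing over $j$ produces precisely the coefficients $2,1,3,2,3$ and the shift $-\alpha'$ rather than something slightly different. A clean way to organize it is to define $\delta(r,i)$ for each residue $r\in\{1,5,7,11,13,17\}$ and each $i\ge1$ (with the convention $i\ge2$ when $r$ is the class of $n_0=1$, i.e. $r=1$), verify the closed form $\varepsilon=3i-\delta(r,i)$ directly from (\ref{cases}) in all six cases, and then note $\sum_r\big(\text{contribution of class }r\big)=\sum_r\big(w_r c_r(\frak n(\dots))\big)-\alpha'$ with $w_r$ read off the table — so that the inequality is in fact an equality up to the harmless rounding already absorbed in Lemma \ref{notmain}. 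Finally I would double check the edge cases $\alpha=1$ (where the product $\mathbb N^{\alpha-1}$ is empty, $c(\frak n(\emptyset))=c(1)$ counts only $n_0=1$, and the bound should reduce to (\ref{firstinteger})) and confirm the hypothesis $\alpha\ge2$, $\varepsilon_1\ne2$ of Lemma \ref{notmain} is met on the relevant range, completing the proof.
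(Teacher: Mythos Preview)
Your approach is essentially identical to the paper's: construct $\frak n$ by picking the $i_{j+1}$-th admissible exponent at each step, invoke Lemma~\ref{notmain} to reduce to a lower bound on $\sum\varepsilon_j$, and then do a six-case bookkeeping to extract the weights $2,1,0,3,2,3$ and the parity term $\alpha'$. The paper carries this out exactly as you outline, writing down the closed forms $\varepsilon(i_1,\dots,i_j,i)$ in each residue class and then the uniform lower bound $\varepsilon\ge 3i-w_r+r(i)$.

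One small correction to your heuristic: the parity shift is \emph{not} literally ``uniform across the classes'' as an equality. For the classes $13$ and $17$ the pattern is reversed (odd $i$ gives the \emph{smaller} of the two residues, so e.g.\ for $13$ one has $\varepsilon=3i-1$ when $i$ is odd but $\varepsilon=3i$ when $i$ is even). This is why the paper states~(\ref{newcases}) as inequalities, not equalities, and why the weights $2$ and $3$ for classes $13$ and $17$ are determined by making the \emph{odd} case tight; your remark that ``the inequality is in fact an equality up to harmless rounding'' is therefore not quite right for these two classes. This does not affect the argument --- the inequality is all that is needed --- and you would catch it in the case-by-case verification you already planned.
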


\begin{proof}We define $\frak n(i_1,\dots,i_\alpha)$ by induction on $\alpha$. When $\alpha=1$, according to (\ref{firstinteger}) the antecedents of $1$ by $S$, distinct from $1$, are the following integers indexed by $i_1\ge2$:
\begin{equation}\label{firstcase}
\frak n(i_1):=\frac13\left(2^{\varepsilon_1(i_1)}-1\right)\hbox{ where }\forall i,\ \varepsilon_1(i):=3i-1\ (i\hbox{ odd) or }3i-2\ (i\hbox{ even))}.
\end{equation}
Suppose now that $\frak n(i_1,\dots,i_j)$ (antecedent of $1$ by $S^j$ and not by $S^{j-1}$) is already defined for any $(i_1,\dots,i_j)\in(\mathbb N\setminus\{1\})\times\mathbb N^{j-1}$. We denote by $0<\varepsilon(i_1,\dots,i_j,1)<\varepsilon(i_1,\dots,i_j,2)<\dots$ ($i\in\mathbb N$) the possible values of $\varepsilon_{j+1}$ in (\ref{cases}); the antecedents of $\frak n(i_1,\dots,i_j)$ by $S$ are
$$
\frak n(i_1,\dots,i_j,i):=\frac13\left(2^{\varepsilon(i_1,\dots,i_j,i)}\frak n(i_1,\dots,i_j)-1\right)\quad(i\in\mathbb N).
$$
We obtain in this way all the antecedents of $1$ by $S^{j+1}$ that are not antecedents of~$1$ by $S^j$. With this notations, the conditions in (\ref{cases}) are equivalent to
$$
\begin{array}{llllll}
\hbox{if }\frak n(i_1,\dots,i_j)\equiv 1,&\varepsilon(i_1,\dots,i_j,i)=3i-1&(i\hbox{ odd})&\hbox{or}&3i-2&(i\hbox{ even})\\
\hbox{if }\frak n(i_1,\dots,i_j)\equiv 5,&\varepsilon(i_1,\dots,i_j,i)=3i&(i\hbox{ odd})&\hbox{or}&3i-1&(i\hbox{ even})\\
\hbox{if }\frak n(i_1,\dots,i_j)\equiv 7,&\varepsilon(i_1,\dots,i_j,i)=3i+1&(i\hbox{ odd})&\hbox{or}&3i&(i\hbox{ even})\\
\hbox{if }\frak n(i_1,\dots,i_j)\equiv 11,&\varepsilon(i_1,\dots,i_j,i)=3i-2&(i\hbox{ odd})&\hbox{or}&3i-3&(i\hbox{ even})\\
\hbox{if }\frak n(i_1,\dots,i_j)\equiv 13,&\varepsilon(i_1,\dots,i_j,i)=3i-1&(i\hbox{ odd})&\hbox{or}&3i&(i\hbox{ even})\\
\hbox{if }\frak n(i_1,\dots,i_j)\equiv 17,&\varepsilon(i_1,\dots,i_j,i)=3i-2&(i\hbox{ odd})&\hbox{or}&3i-1&(i\hbox{ even}).\end{array}
$$
Setting $r(i):=\left\{\begin{array}{ll}1&(i\hbox{ odd})\\0&(i\hbox{ even})\end{array}\right.$ (remainder of $n$ modulo $2$), we have for any $i\in\mathbb N$
\begin{equation}\label{newcases}
\begin{array}{llll}
\hbox{if }\frak n(i_1,\dots,i_j)\equiv 1,&\varepsilon(i_1,\dots,i_j,i)\ge3i-2+r(i)\\
\hbox{if }\frak n(i_1,\dots,i_j)\equiv 5,&\varepsilon(i_1,\dots,i_j,i)\ge3i-1+r(i)\\
\hbox{if }\frak n(i_1,\dots,i_j)\equiv 7,&\varepsilon(i_1,\dots,i_j,i)\ge3i-0+r(i)\\
\hbox{if }\frak n(i_1,\dots,i_j)\equiv 11,&\varepsilon(i_1,\dots,i_j,i)\ge3i-3+r(i)\\
\hbox{if }\frak n(i_1,\dots,i_j)\equiv 13,&\varepsilon(i_1,\dots,i_j,i)\ge3i-2+r(i)\\
\hbox{if }\frak n(i_1,\dots,i_j)\equiv 17,&\varepsilon(i_1,\dots,i_j,i)\ge3i-3+r(i).\end{array}
\end{equation}
We consider the formula (\ref{firstcase}), and the formulas (\ref{newcases}) for $j=1,\dots,\alpha-1$: they depend on the value modulo $18$ of the integers $1,\frak n(i_1),\dots,\frak n(i_1,\dots,i_{\alpha-1})$ respectively. In other words, these formulas depend on the orbit of $\frak n(i_1,\dots,i_{\alpha-1})$ by $S$. Using Lemma \ref{notmain} and the definitions of $c(n)$ and $\alpha'(i_1,\dots,i_{\alpha})=\sum_{j=0}^\alpha r(i_j)$ we deduce
$$
\frak n(i_1,\dots,i_\alpha)\ge\frac{2^{\sum_{j=1}^\alpha\varepsilon(i_1,\dots,i_j)}}{\alpha3^\alpha}\ge\frac{2^{3(i_1+\dots+i_\alpha)-c(\frak n(i_1,\dots,i_{\alpha-1}))+\alpha'(i_1,\dots,i_{\alpha})}}{\alpha3^\alpha}.
$$
\end{proof}

\section{A first bound for $\#W\cap\{1,\dots,N\}$}\label{sec:three}

In the following lemma we specify how to obtain all the antecedents of $1$ by the powers of $S$ or~$T$.

\begin{lem}\label{ant}
(i) The set of the antecedents of $1$ by the powers of $S$ (resp. by the powers of $T$), namely
$$
\mathcal S:=\{n\in\sqcap\;:\;\exists\alpha\ge0,\ S^\alpha(n)=1\}\quad(\hbox{resp. }\mathcal T:=\{n\in\mathbb N\;:\;\exists k\ge0,\ T^k(n)=1\}),
$$
can also be defined by
$$
\mathcal S=\bigcup_{\alpha\ge0}\sqcap_\alpha\ (\hbox{where}\ \sqcap_0:=\{1\})\quad\hbox{and}\quad\mathcal T=\mathbb N\cap\bigcup_{i\ge0}\bigcup_{j\ge1}\frac{2^i}3(2^j\mathcal S-1).
$$
(ii) If $n\in\mathcal S$ there exist $\alpha\ge0,i'_1,\dots,i'_\alpha\ge1$ and $A\subset\{1,\dots,\alpha\}$ such that
\begin{equation}\label{iprime}
n=\frak n(i_1,\dots,i_\alpha)\hbox{ with }i_j=\left\{\begin{array}{ll}2i'_j-1&\hbox{if }j\in A\\2i'_j&\hbox{else}\end{array}\right.
\end{equation}
and $i'_1\ne1$if $1\in A$. If $n=\frak n(i_1,\dots,i_\alpha)$ belongs to $W$,
\begin{equation}\label{ineq}
i'_1+\dots+i'_\alpha-\frac{11}6\Big(\frac16+0.0215\Big)(\alpha+1)-\frac13\#A-\frac{\log\alpha}{6\log2}-\alpha\frac{\log3}{6\log2}\le\frac{\log n}{6\log2}.
\end{equation}
\end{lem}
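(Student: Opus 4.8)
The plan is to obtain both parts of Lemma \ref{ant} essentially by unwinding definitions; the only quantitative input is the passage from (\ref{lb}) to (\ref{ineq}), which will reduce to bounding the auxiliary quantity $c$.

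For part (i), the identity $\mathcal S=\bigcup_{\alpha\ge0}\sqcap_\alpha$ is immediate: given $n\in\mathcal S$, take the least $\alpha$ with $S^\alpha(n)=1$; then $n=1\in\sqcap_0$ if $\alpha=0$, and $n\in\sqcap_\alpha$ otherwise since minimality forces $S^{\alpha-1}(n)\ne1$, while the reverse inclusion is trivial. For $\mathcal T$ I would write an arbitrary $n\in\mathbb N$ as $2^in'$ with $n'$ odd, so that $T^i(n)=n'$; if $n'=1$ then $n=\frac{2^i}3(2^2-1)$ with $1\in\mathcal S$, and if $n'>1$ I factor $3n'+1=2^jm$ with $m$ odd and $j\ge1$. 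Since $3n'+1\equiv1\pmod3$ one has $m\in\sqcap$, $T^{i+1+j}(n)=m$ and $n=\frac{2^i}3(2^jm-1)$; moreover none of $n,T(n),\dots,T^{i+j}(n)$ equals $1$, so the $T$-orbit of $n$ reaches $1$ if and only if that of $m$ does, which, since $m\in\sqcap$, is the same as $m\in\mathcal S$. Reading this computation backwards gives the opposite inclusion, hence the stated description of $\mathcal T$.

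For the first half of part (ii), combining (i) with the bijection of Lemma \ref{main} furnishes $\alpha$ and $(i_1,\dots,i_\alpha)\in(\mathbb N\setminus\{1\})\times\mathbb N^{\alpha-1}$ with $n=\frak n(i_1,\dots,i_\alpha)$. I set $A:=\{1\le j\le\alpha:i_j\text{ odd}\}$ and $i'_j:=\lceil i_j/2\rceil$; this is exactly (\ref{iprime}), and $i_1\ge2$ forces $i'_1\ge2$ whenever $1\in A$. Then $\#A=\alpha'(i_1,\dots,i_\alpha)$ and $i_1+\dots+i_\alpha=2(i'_1+\dots+i'_\alpha)-\#A$, so the exponent appearing in (\ref{lb}) rewrites as $6(i'_1+\dots+i'_\alpha)-2\#A-c(\frak n(i_1,\dots,i_{\alpha-1}))$. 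Taking logarithms in (\ref{lb}) and dividing by $6\log2$ then yields (\ref{ineq}) as soon as $\tfrac16c(\frak n(i_1,\dots,i_{\alpha-1}))\le\tfrac{11}6(\tfrac16+0.0215)(\alpha+1)$; note that for $n\in W$ one automatically has $\alpha\ge2$, since $\max_{i\in I}c_i(n)\ge1>(\tfrac16+0.0215)(\alpha+1)$ when $\alpha\le1$, so Lemma \ref{main} and hence (\ref{lb}) do apply.

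So the real content is the bound $c(\frak n(i_1,\dots,i_{\alpha-1}))\le11(\tfrac16+0.0215)(\alpha+1)$ when $n\in W$. Here I would use that $\frak n(i_1,\dots,i_{\alpha-1})=S(n)$, so its $T$-orbit is contained in $\mathcal O_n$ and $c_i(\frak n(i_1,\dots,i_{\alpha-1}))\le c_i(n)$ for every residue $i$; that the elements of $\mathcal O_n$ lying in $\sqcap$ are precisely the $\alpha+1$ pairwise distinct integers $n=S^0(n),\dots,S^\alpha(n)=1$, whence $\sum_{i\in I}c_i(n)=\alpha+1$ and membership in $W$ reads $c_i(n)\le(\tfrac16+0.0215)(\alpha+1)$ for each $i\in I$; and that the coefficients $2,1,3,2,3$ of $c_1,c_5,c_{11},c_{13},c_{17}$ in the definition of $c$ sum to $11$. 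Combining these gives $c(\frak n(i_1,\dots,i_{\alpha-1}))\le c(n)\le11(\tfrac16+0.0215)(\alpha+1)$, and (\ref{ineq}) follows. The one point requiring care — and the main, if still routine, obstacle — is this orbit bookkeeping: identifying the $\sqcap$-part of the $T$-orbit with the finite, repetition-free $S$-trajectory $n,S(n),\dots,1$, and checking that passing from $n$ to $S(n)$ discards exactly one element of $\sqcap$ from the orbit.
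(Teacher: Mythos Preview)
Your proof is correct and follows essentially the same route as the paper's: part (i) by tracking how the $T$-orbit of $n$ reaches its first element of $\sqcap$, and part (ii) by combining the bijection of Lemma~\ref{main} with the bound $c\le11(\tfrac16+0.0215)(\alpha+1)$ coming from membership in $W$ together with $\sum_{i\in I}c_i(n)=\alpha+1$. You are in fact slightly more careful than the paper in two places: you note explicitly that (\ref{lb}) involves $c(\frak n(i_1,\dots,i_{\alpha-1}))=c(S(n))\le c(n)$ rather than $c(n)$ itself, and you check that $n\in W$ forces $\alpha\ge2$ so that Lemma~\ref{notmain} (and hence (\ref{lb})) is available.
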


\begin{proof}
(i) The first relation is obvious and the second follows from the fact that the orbit of any $n\in\mathbb N$ by the transformation $T$, begins by
$$
n\ \xrightarrow{T^i\ (i\ge0)}\ \frac n{2^i}\ \xrightarrow{T}\ 3\frac n{2^i}+1\ \xrightarrow{T^j\ (j\ge1)}\ \frac{3\frac n{2^i}+1}{2^j}\ \in\sqcap\quad(\hbox{or }\in\mathcal S,\hbox{ if }n\in\mathcal T).
$$

(ii) (\ref{iprime}) is a consequence of Lemma \ref{main}. For any $n\in W$,
\begin{equation}\label{c(n)}
c(n)\le11\Big(\frac16+0.0215\Big)\sum_{i\in I}c_i(n)=11\Big(\frac16+0.0215\Big)\#\{m\in\mathcal O_n\;:\;m\hbox{ odd}\}.
\end{equation}
Now if n$=\frak n(i_1,\dots,i_\alpha)$ there are $\alpha+1$ odd integers in $\mathcal O_n$, so (\ref{ineq}) follows from (\ref{lb}) and (\ref{c(n)}).
\end{proof}

\begin{lem}\label{upper}
There exists a constant $K$ such that
$$
\#W\cap\{1,\dots,N\}\le K(\log N)^K\max_{(\alpha',\alpha'')\in A(N)}\frac{(\alpha'+\alpha''+\alpha''')!}{\alpha'!\ \alpha''!\ \alpha'''!}
$$
where $\alpha'''=\alpha'''(N,\alpha',\alpha'')$ is defined by
$$
\alpha'''(N,\alpha',\alpha''):=\lfloor0.2405\log N+0.345-0.05749\ \alpha'-0.39083\ \alpha''\rfloor
$$
and $A(N):=\{(\alpha',\alpha'')\in(\mathbb N\cup\{0\})^2\;:\;\alpha'''(N,\alpha',\alpha'')\ge0\}$.
\end{lem}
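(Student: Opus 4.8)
The plan is to reduce $\#(W\cap\{1,\dots,N\})$, up to a factor $O((\log N)^{2})$, to the number of elements of $W\cap\mathcal S$ in $\{1,\dots,2N\}$; then to convert the arithmetic inequality (\ref{ineq}) of Lemma \ref{ant} into a lattice‑point count; then to read that count off as a sum of multinomial coefficients; and finally to match those multinomials with the ones in the statement.

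\emph{Reduction to $W\cap\mathcal S$.} For $n\in W$ with $1\le n\le N$ write $n=2^{i}m$ with $m$ odd. The set $\mathcal O_n\setminus\mathcal O_m$ consists of the even numbers $n,n/2,\dots,n/2^{i-1}$; if moreover $3\mid m$, put $m':=(3m+1)/2^{v_2(3m+1)}\in\sqcap$, and then $\mathcal O_m\setminus\mathcal O_{m'}$ consists of $m$ (odd but $\notin\sqcap$) together with the even numbers $3m+1,\dots,(3m+1)/2^{v_2(3m+1)-1}$. In both cases the erased terms meet no residue class of $I$ mod $18$, so $n\in W\iff m\in W\iff m'\in W$. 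Now $m$ (resp.\ $m'$) lies in $\sqcap\cap\mathcal T=\mathcal S$ and is $\le N$ (resp.\ $\le(3N+1)/2<2N$), and the injective assignment $n\mapsto(m,i)$ if $3\nmid m$, $n\mapsto(m',i,v_2(3m+1))$ if $3\mid m$, has at most $(1+\log_2 N)(1+\log_2(3N+1))$ preimages above any first coordinate, whence
$$
\#(W\cap\{1,\dots,N\})\le 2\,(1+\log_2 N)(1+\log_2(3N+1))\,\#(W\cap\mathcal S\cap\{1,\dots,2N\}).
$$

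\emph{The count.} Since $\mathcal O_1=\{1,2,4\}$ gives $c_1(1)\big/\sum_{i\in I}c_i(1)=1>\frac16+0.0215$, one has $1\notin W$; so every $n\in W\cap\mathcal S\cap\{1,\dots,2N\}$ equals $\frak n(i_1,\dots,i_\alpha)$ with $\alpha\ge1$, and by (\ref{iprime}) it is determined by a subset $A\subseteq\{1,\dots,\alpha\}$ and a tuple $(i'_1,\dots,i'_\alpha)\in\mathbb N^{\alpha}$. By (\ref{ineq}) and $\log n\le\log 2N$,
$$
i'_1+\dots+i'_\alpha\le R:=\tfrac{\log 2N}{6\log 2}+\tfrac{11}{6}\bigl(\tfrac16+0.0215\bigr)(\alpha+1)+\tfrac{\#A}{3}+\tfrac{\log\alpha}{6\log 2}+\alpha\,\tfrac{\log 3}{6\log 2}.
$$
The coefficient of $\alpha$ in $R$ equals $\frac{11}{6}(\frac16+0.0215)+\frac13+\frac{\log3}{6\log2}=0.94\dots$, which is $<1$ even after absorbing $\frac{\log\alpha}{6\log2}\le\varepsilon\alpha+C_\varepsilon$ for a sufficiently small fixed $\varepsilon$; hence $\alpha\le i'_1+\dots+i'_\alpha\le R$ forces $\alpha\le C'\log N$, so only $O((\log N)^{2})$ pairs $(\alpha,\#A)$ arise. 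For a fixed such pair, $n$ is one of at most $\binom{\alpha}{\#A}$ (choices of $A$) times $\binom{\lfloor R\rfloor}{\alpha}$ (tuples $(i'_1,\dots,i'_\alpha)\in\mathbb N^{\alpha}$ of sum $\le\lfloor R\rfloor$) integers, since $\frak n$ is one‑to‑one. Writing $\alpha'=\#A$, $\alpha''=\alpha-\#A$, $\beta=\lfloor R\rfloor-\alpha$, the identity $\binom{\alpha}{\#A}\binom{\lfloor R\rfloor}{\alpha}=\frac{(\alpha'+\alpha''+\beta)!}{\alpha'!\,\alpha''!\,\beta!}$ gives
$$
\#(W\cap\mathcal S\cap\{1,\dots,2N\})\le C''(\log N)^{2}\max_{\alpha,\#A}\frac{(\alpha'+\alpha''+\beta)!}{\alpha'!\,\alpha''!\,\beta!}.
$$

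\emph{Matching the multinomials.} It remains to compare $\beta=\lfloor R\rfloor-(\alpha'+\alpha'')$ with $\alpha'''(N,\alpha',\alpha'')$. A direct computation shows $\frac1{6\log2}=0.2404\dots<0.2405$, $\frac{11}{6}(\frac16+0.0215)=0.3449\dots<0.345$, and that the coefficients of $\alpha'$ and $\alpha''$ in $R-\alpha$, namely $\frac{11}{6}(\frac16+0.0215)+\frac13+\frac{\log3}{6\log2}-1=-0.05753\dots$ and $\frac{11}{6}(\frac16+0.0215)+\frac{\log3}{6\log2}-1=-0.39086\dots$, lie just below $-0.05749$ and $-0.39083$; so, after absorbing the constant $\frac{\log2}{6\log2}$, the term $\frac{\log\alpha}{6\log2}\le\varepsilon\alpha+C_\varepsilon$, and the floor, one gets $\beta\le\alpha'''(N,\alpha',\alpha'')+C$ with $C$ an absolute constant. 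If $(\alpha',\alpha'')\in A(N)$, then (using that $x\mapsto(\alpha'+\alpha''+x)!/x!$ increases) $\frac{(\alpha'+\alpha''+\beta)!}{\alpha'!\alpha''!\beta!}\le\binom{\alpha+C}{C}\frac{(\alpha'+\alpha''+\alpha''')!}{\alpha'!\alpha''!\alpha'''!}\le(\alpha+C)^{C}\max_{A(N)}(\cdots)$. If $(\alpha',\alpha'')\notin A(N)$, then $\beta\le C$, so $\binom{\lfloor R\rfloor}{\alpha}=\binom{\alpha+\beta}{\beta}\le(\alpha+C)^{C}$ is polynomial in $\log N$, while $\binom{\alpha}{\alpha'}$ is at most $(\log N)^{O(1)}$ times the multinomial at the boundary point $(\lfloor\lambda\alpha'\rfloor,\lfloor\lambda\alpha''\rfloor)\in A(N)$ with $\lambda=\frac{0.2405\log N+0.345}{0.2405\log N+0.345+C+1}=1-O(1/\log N)$, because $\alpha,\alpha',\alpha''$ exceed their $\lambda$‑scaled floored versions only by $O(1)$. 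Absorbing all $(\log N)^{O(1)}$ factors into $K(\log N)^{K}$ completes the proof; the delicate part is exactly this last step — checking that the passage $N\to2N$, the term $\frac{\log\alpha}{6\log2}$ stemming from the factor $\frac1\alpha$ in (\ref{lb}), the floor functions, and the configurations $(\alpha',\alpha'')$ just outside $A(N)$ each cost only a power of $\log N$ and never a power of $N$, for which the tight calibration of the constants $0.2405$, $0.345$, $0.05749$, $0.39083$ leaves just enough room.
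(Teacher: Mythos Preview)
Your proof is correct and follows the same overall strategy as the paper: reduce from $W\cap\{1,\dots,N\}$ to $W\cap\mathcal S\cap\{1,\dots,2N\}$ at a cost of $O((\log N)^2)$, then use the inequality~(\ref{ineq}) to bound $i'_1+\dots+i'_\alpha$ and count the admissible $(A,i'_1,\dots,i'_\alpha)$ by the multinomial $\binom{\alpha}{\alpha'}\binom{\alpha+\alpha'''}{\alpha}$.

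The one structural difference worth noting is in how the nuisance term $\tfrac{\log\alpha}{6\log 2}$ and the boundary configurations are handled. You absorb $\tfrac{\log\alpha}{6\log 2}\le\varepsilon\alpha+C_\varepsilon$ into an additive constant, which forces you into the somewhat delicate scaling argument for the case $(\alpha',\alpha'')\notin A(N)$ (shrinking by $\lambda=1-O(1/\log N)$ and tracking $O(1)$ shifts in $\alpha',\alpha''$). The paper instead splits crudely on $\alpha$: for $\alpha\ge10^{10}$ one has $\tfrac{\log\alpha}{\alpha}\le10^{-8}$, which fits entirely inside the deliberate slack built into the constants $0.2405$, $0.345$, $0.33334$, $0.26417$, so that (\ref{ineq}) becomes \emph{exactly} $i'_1+\dots+i'_\alpha\le\alpha+\alpha'''(n,\alpha',\alpha'')$ and hence $(\alpha',\alpha'')\in A(N)$ automatically; the range $\alpha<10^{10}$ is then disposed of by the trivial bound $\#\sqcap_\alpha\cap\{1,\dots,N\}\le(K_2\log N)^{10^{10}}$. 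This buys a cleaner argument with no boundary case at all, at the price of an enormous but harmless threshold. Your route works too, and has the merit of making transparent that the specific numerical constants in $\alpha'''$ are upper roundings of the exact coefficients $\tfrac1{6\log2}$, $\tfrac{11}{6}(\tfrac16+0.0215)$, etc., with just enough room to swallow all the $O(1)$ debris.
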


\begin{proof}
We use the one-to-one map $\frak n:(\mathbb N\setminus\{1\})\times\mathbb N^{\alpha-1}\leftrightarrow\sqcap_\alpha$ defined in Lemma~\ref{main}, and the notation
$$
\begin{array}{l}A(i_1,\dots,i_\alpha):=\{1\le j\le\alpha\;:\;i_j\hbox{ odd}\}\\
\sqcap_{\alpha,\alpha'}:=\{n=\frak n(i_1,\dots,i_\alpha)\in\sqcap_\alpha\;:\;\#A(i_1,\dots,i_\alpha)=\alpha'\}.\end{array}
$$
Now the nonnegative integers $N\in\mathbb N$, $\alpha,\alpha',\alpha''\ge0$ are fixed with $\alpha=\alpha'+\alpha''$. Assume for instance that $\alpha\ge10^{10}$: then we have $\frac{\log\alpha}\alpha\le10^{-8}$. We use the notations of Lemma \ref{ant} (ii); we deduce from~(\ref{ineq}) that, if there exists at least one element $n=\frak n(i_1,\dots,i_\alpha)\in\sqcap_{\alpha,\alpha'}\cap W\cap\{1,\dots,N\}$,
\begin{equation}\label{deduced}
i'_1+\dots+i'_\alpha-0.345(\alpha+1)-0.33334\ \alpha'-0.26417\ \alpha\le0.2405\log n.
\end{equation}
This inequality is equivalent to
$$
i'_1+\dots+i'_\alpha\le\alpha+\alpha'''(n,\alpha',\alpha'').
$$
This last inequality with $\alpha\le i'_1+\dots+i'_\alpha$ implies $\alpha'''(n,\alpha',\alpha'')\ge0$ and a fortiori $\alpha'''(N,\alpha',\alpha'')\ge0$. So we have proved that, if the set $\sqcap_{\alpha,\alpha'}\cap W\cap\{1,\dots,N\}$ is not empty, $(\alpha',\alpha'')$ belongs to $A(N)$. 

Let us bound the number of elements of $\sqcap_{\alpha,\alpha'}\cap W\cap\{1,\dots,N\}$. We can associate injectively to any $n$ in this set, some integers $i'_1,\dots,i'_\alpha$ such that
$$
1\le i'_1<i'_1+i'_2<\dots<i'_1+\dots+i'_\alpha\le\alpha+\alpha'''\quad(\hbox{where }\alpha'''=\alpha'''(N,\alpha',\alpha''))
$$
and a subset $A\subset\{1,\dots,\alpha\}$ of cardinality $\alpha'$, such that
$$
A(i_1,\dots,i_\alpha)=A,\hbox{ where }i_j=\left\{\begin{array}{ll}2i'_j-1&\hbox{if }n\in A\\2i'_j&\hbox{else}.\end{array}\right.
$$
Consequently
$$
\#\sqcap_{\alpha,\alpha'}\cap W\cap\{1,\dots,N\}\le\left(\begin{array}{c}\alpha+\alpha'''\\\alpha\end{array}\right)\cdot\left(\begin{array}{c}\alpha\\\alpha'\end{array}\right)=\frac{(\alpha'+\alpha''+\alpha''')!}{\alpha'!\ \alpha''!\ \alpha'''!}.
$$
The inequality $\alpha'''(N,\alpha',\alpha'')\ge0$ implies $\alpha\le K_1\log N$ with $K_1$ constant, hence
\begin{equation}\label{firstsum}
\sum_{\alpha=10^{10}}^{\lfloor K_1\log N\rfloor}\sum_{\alpha'=0}^\alpha\#\sqcap_{\alpha,\alpha'}\cap W\cap\{1,\dots,N\}\le(K_1\log N)^2\max_{(\alpha',\alpha'')\in A(N)}\frac{(\alpha'+\alpha''+\alpha''')!}{\alpha'!\ \alpha''!\ \alpha'''!}.
\end{equation}
It remains to bound $\displaystyle\sum_{\alpha=0}^{10^{10}-1}\#\sqcap_{\alpha}\cap W\cap\{1,\dots,N\}$. One can associate injectively to any $n\in\sqcap_{\alpha}\cap W\cap\{1,\dots,N\}$, some positive integers $\varepsilon_1,\dots,\varepsilon_\alpha$ such that (\ref{n}) holds. According to Lemma \ref{notmain} one has $2^{\varepsilon_1+\dots+\varepsilon_\alpha}\le\alpha3^\alpha N<10^{10}3^{10^{10}}N$, hence any $\varepsilon_j$ is bounded by $K_2\log N$ with $K_2$ constant. Consequently
\begin{equation}\label{secondsum}
\sum_{\alpha=0}^{10^{10}-1}\#\sqcap_{\alpha}\cap W\cap\{1,\dots,N\}\le10^{10}(K_2\log N)^{10^{10}}.
\end{equation}
From Lemma \ref{ant} (i), $\mathcal S$ is the union of the $\sqcap_{\alpha}$ hence, from (\ref{firstsum}) and (\ref{secondsum}), there exists a constant $K_3$ such that
\begin{equation}\label{forS}
\#\mathcal S\cap W\cap\{1,\dots,N\}\le K_3(\log N)^{K_3}\max_{(\alpha',\alpha'')\in A(N)}\frac{(\alpha'+\alpha''+\alpha''')!}{\alpha'!\ \alpha''!\ \alpha'''!}.
\end{equation}
Let us bound now $\#W\cap\{1,\dots,N\}$. By Lemma \ref{ant} (i) any $n\in\mathcal T\cap W\cap\{1,\dots,N\}=W\cap\{1,\dots,N\}$ can be written
$$
n=\frac{2^i}3(2^js-1)\hbox{ with }i\ge0,\ j\ge1,\ s\in\mathcal S\cap W.
$$
This implies $s\le2N$, $2^i\le3N$ and $2^j\le3N+1$. So there are at most $K_4(\log N)^2$ possible values for the couple $(i,j)$, with $K_4$ constant, and
\begin{equation}\label{forT}
\#W\cap\{1,\dots,N\}\le K_4(\log N)^2\#\mathcal S\cap W\cap\{1,\dots,2N\}.
\end{equation}
The lemma follows from (\ref{forS}) and (\ref{forT}).
\end{proof}

\section{Proof of the theorem}

\begin{lem}\label{upperbis}
Let $\ell(N):=0.2405\log N+0.345$. With the notations of Lemma \ref{upper}, one has
$$
\max_{(\alpha',\alpha'')\in A(N)}\frac{(\alpha'+\alpha''+\alpha''')!}{\alpha'!\ \alpha''!\ \alpha'''!}\le\max_{(x,y)\in T}\left(\frac{(x+y+z)^{x+y+z}}{x^x\ y^y\ z^z}\right)^{\ell(N)}
$$
where $z=1-0.05749\ x-0.39083\ y$ and $T:=\{(x,y)\;:\;x\ge0,y\ge0,z\ge0\}$.
\end{lem}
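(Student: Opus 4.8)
The plan is to reduce the discrete maximum on the left to the continuous one on the right by combining the elementary multinomial estimate
$$
\frac{(a+b+c)!}{a!\,b!\,c!}\le\frac{(a+b+c)^{a+b+c}}{a^a\,b^b\,c^c}=:f(a,b,c)
$$
with the log-homogeneity of $f$. First I would recall why this estimate holds: expanding $1=\big(\tfrac a{a+b+c}+\tfrac b{a+b+c}+\tfrac c{a+b+c}\big)^{a+b+c}$ by the multinomial theorem and discarding all terms except the one with exponents $(a,b,c)$ gives $1\ge\frac{(a+b+c)!}{a!\,b!\,c!}\cdot\frac{a^a\,b^b\,c^c}{(a+b+c)^{a+b+c}}$, with the convention $0^0=1$ (and the fraction read with fewer factors when one argument vanishes). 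Applying this with $(a,b,c)=(\alpha',\alpha'',\alpha''')$, where $\alpha'''=\alpha'''(N,\alpha',\alpha'')$, bounds every term of the left-hand maximum by $f(\alpha',\alpha'',\alpha''')$.

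Next I would pass to the normalised variables $x:=\alpha'/\ell(N)$, $y:=\alpha''/\ell(N)$, $w:=\alpha'''/\ell(N)$, which is legitimate since $\ell(N)\ge0.345>0$. The function $f$ satisfies $f(ta,tb,tc)=f(a,b,c)^{\,t}$ for $t>0$, because the $\log t$ contributions in $\log f(ta,tb,tc)$ cancel (their coefficient is $t[(a+b+c)-a-b-c]=0$), leaving $\log f(ta,tb,tc)=t\log f(a,b,c)$. Hence $f(\alpha',\alpha'',\alpha''')=f(x,y,w)^{\ell(N)}$, and it suffices to prove that $f(x,y,w)\le f(x,y,z)$ with $z=1-0.05749\,x-0.39083\,y$ and $(x,y)\in T$.

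For the last comparison I would use two observations. Firstly $(x,y)\in T$: clearly $x,y\ge0$; membership of $(\alpha',\alpha'')$ in $A(N)$ means $\alpha'''\ge0$, so $w\ge0$; and since $\ell(N)\,z=\ell(N)-0.05749\,\alpha'-0.39083\,\alpha''$ we get $\alpha'''=\lfloor\ell(N)\,z\rfloor\le\ell(N)\,z$, i.e. $w\le z$, whence also $z\ge0$. Secondly, for fixed $x,y$ the map $c\mapsto f(x,y,c)$ is non-decreasing on $[0,\infty)$, because $\frac{\partial}{\partial c}\log f(x,y,c)=\log\frac{x+y+c}{c}\ge0$ for $c>0$ (and $f$ is continuous at $c=0$ with the $0^0=1$ convention). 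Combining $w\le z$ with this monotonicity gives $f(x,y,w)\le f(x,y,z)$; taking the maximum over $(x,y)\in T$ — a compact triangle on which $f$ is continuous, so the maximum exists and is finite — yields the stated bound.

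I do not expect a genuine obstacle here; the only points needing care are getting the direction of the multinomial inequality right, the behaviour on the boundary where one of $\alpha',\alpha'',\alpha'''$ is zero (covered by the $0^0=1$ convention), and the step that "rounds $\alpha'''$ up" to $\ell(N)z$, which is precisely what the monotonicity of $c\mapsto f(x,y,c)$ is used for.
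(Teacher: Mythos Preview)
Your proof is correct and follows essentially the same route as the paper: the multinomial estimate $\frac{(a+b+c)!}{a!\,b!\,c!}\le\frac{(a+b+c)^{a+b+c}}{a^a b^b c^c}$, then the monotonicity of $c\mapsto f(x,y,c)$ (via $\partial_c\log f=\log\frac{x+y+c}{c}\ge0$) to replace the floored $\alpha'''$ by the unfloored value $\ell(N)z$, and finally the rescaling by $\ell(N)$ using log-homogeneity. The only cosmetic differences are that the paper cites its Corollary~\ref{classical} for the multinomial bound (proved there by reduction to the binomial case) whereas you obtain it in one line from the multinomial expansion of $1$, and the paper applies the monotonicity step at the unnormalised level $(\alpha',\alpha'',\alpha''')\to(\alpha',\alpha'',\alpha'''')$ before rescaling, while you rescale first and then compare $w\le z$; the two orders are equivalent.
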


\begin{proof}
Let $(\alpha',\alpha'')\in A(N)$, the reals
$$
x=\frac{\alpha'}{\ell(N)}\quad\hbox{and}\quad y=\frac{\alpha''}{\ell(N)},
$$
satisfy $(x,y)\in T$. By Corollary \ref{classical} one has

\begin{equation}\label{tierce}
\frac{(\alpha'+\alpha''+\alpha''')!}{\alpha'!\ \alpha''!\ \alpha'''!}\le\frac{(\alpha'+\alpha''+\alpha''')^{\alpha'+\alpha''+\alpha'''}}{\alpha'^{\alpha'}\alpha''^{\alpha''}\alpha'''^{\alpha'''}}.
\end{equation}

The map $t\mapsto\frac{(\alpha'+\alpha''+t)^{\alpha'+\alpha''+t}}{\alpha'^{\alpha'}\alpha''^{\alpha''}t^t}$ is not decreasing because the derivative of its logarithm is $\log(1+\frac{\alpha'+\alpha''}t)\ge0$. Recall that $\alpha'''=\alpha'''(N,\alpha',\alpha'')$ is the integral part~of
$$
\alpha''''=\alpha''''(N,\alpha',\alpha''):=\ell(N)-0.0575\ \alpha'-0.39084\ \alpha'',
$$
so one has
\begin{equation}\label{quarte}
\frac{(\alpha'+\alpha''+\alpha''')^{\alpha'+\alpha''+\alpha'''}}{\alpha'^{\alpha'}\ \alpha''^{\alpha''}\ \alpha'''^{\alpha'''}}\le\frac{(\alpha'+\alpha''+\alpha'''')^{\alpha'+\alpha''+\alpha''''}}{\alpha'^{\alpha'}\ \alpha''^{\alpha''}\ \alpha''''^{\alpha''''}}.
\end{equation}
Lemma \ref{upperbis} results from (\ref{tierce}) and (\ref{quarte}) because the real $z$, as defined in this lemma, is equal to $\frac{\alpha''''}{\ell(N)}$.
\end{proof}

\begin{proof}[End of the proof of the theorem]
We apply Lemma \ref{interior} to $a=0.05749$ and $b=0.39083$: the function $\varphi$ attains its maximum in the interior of $T$, let $(x_0,y_0)$ be a point where $\varphi$ is maximal and let $z_0=1-ax_0-by_0$. Since $\varphi$ is differentiable on the interior of $T$, the partial derivatives are null at $(x_0,y_0)$:
$$
\left\{\begin{array}{lcl}(1-a)\log(x_0+y_0+z_0)-\log x_0+a\log z_0&=&0\\
(1-b)\log(x_0+y_0+z_0)-\log y_0+b\log z_0&=&0.\end{array}\right.
$$
Let $w_0:=0.2405\ \varphi(x_0,y_0,z_0)$; we compute $x_0,y_0,z_0,w_0$ by approximation and obtain
$$
w_0-0.9998\in(0,10^{-4}).
$$
From Lemma \ref{upper} and Lemma \ref{upperbis},
$$
\begin{array}{rcl}\#W\cap\{1,\dots,N\}&\le&\hbox{constant}\cdot(\log N)^K\cdot e^{\varphi(x_0,y_0,z_0)\ 0.2405\log N}\\
&\le&\hbox{constant}\cdot(\log N)^KN^{w_0}\end{array}
$$
hence $\#W\cap\{1,\dots,N\}\le N^{0.9999}$ for $N$ large enough.\end{proof}

\appendix

\section{Classical bound for the binomial coefficients}

\begin{lem}\label{bin}
For any $m,n\in\mathbb N$, $\frac{(m+n)!}{m!n!}\le\frac{(m+n)^{m+n}}{m^mn^n}$.
\end{lem}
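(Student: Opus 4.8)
The plan is to read the bound off from a single term of the binomial expansion of $(m+n)^{m+n}$; neither induction nor any analytic estimate is required.

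First I would split $m+n$ into the two positive summands $m$ and $n$ and expand the $(m+n)$-th power by the binomial theorem,
$$
(m+n)^{m+n}=\sum_{k=0}^{m+n}\binom{m+n}{k}\,m^{k}\,n^{m+n-k}.
$$
Every summand on the right-hand side is nonnegative, so the whole sum is at least the value of the single summand of index $k=m$, namely $\binom{m+n}{m}\,m^{m}\,n^{n}=\frac{(m+n)!}{m!\,n!}\,m^{m}\,n^{n}$. Since $m,n\ge1$ one has $m^{m}n^{n}>0$, and dividing the inequality $(m+n)^{m+n}\ge\frac{(m+n)!}{m!\,n!}\,m^{m}\,n^{n}$ by $m^{m}n^{n}$ produces precisely $\frac{(m+n)!}{m!\,n!}\le\frac{(m+n)^{m+n}}{m^{m}n^{n}}$, the desired inequality.

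I expect no real obstacle: this is the most elementary statement in the paper, recorded in the appendix only so that it can be quoted --- through Corollary \ref{classical} --- for the multinomial refinement $\frac{(\alpha'+\alpha''+\alpha''')!}{\alpha'!\ \alpha''!\ \alpha'''!}\le\frac{(\alpha'+\alpha''+\alpha''')^{\alpha'+\alpha''+\alpha'''}}{\alpha'^{\alpha'}\ \alpha''^{\alpha''}\ \alpha'''^{\alpha'''}}$ used in Lemma \ref{upperbis}. The only points worth a word are formal: if $0$ is admitted in $\mathbb N$, then with the convention $0^{0}=1$ both sides degenerate to the corresponding one-variable quantity and the inequality still holds; and the three-variable form needed downstream follows by iterating the two-variable one, writing $\frac{(a+b+c)!}{a!\,b!\,c!}=\binom{a+b+c}{c}\binom{a+b}{a}$ and bounding each binomial coefficient separately, the intermediate powers of $a+b$ cancelling.
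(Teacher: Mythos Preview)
Your argument is correct and is in fact cleaner than the one in the paper. The paper proceeds by induction on $n$: it sets $f(m,n)=\frac{(m+n)!}{m!n!}\cdot\frac{m^mn^n}{(m+n)^{m+n}}$, checks $f(m,1)\le1$, and then shows $f(m,n+1)\le f(m,n)$ by reducing this to the monotonicity of $g(x)=\bigl(\frac{x}{x-1}\bigr)^{x-1}$, which is established via the derivative and the inequality $-\log(1+t)+t\ge0$. Your route---reading the inequality off from the single $k=m$ term of the binomial expansion of $(m+n)^{m+n}$---avoids both the induction and the calculus, and is the standard slick proof. The paper's approach does give the marginally stronger information that $f(m,n)$ is nonincreasing in $n$, but this is not used anywhere; for the application to Corollary~\ref{classical} and Lemma~\ref{upperbis} your argument is entirely sufficient, and your remark on iterating to three variables matches exactly what the paper does.
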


\begin{proof}
Let $f(m,n)=\frac{(m+n)!}{m!n!}\cdot\frac{m^mn^n}{(m+n)^{m+n}}$, one has obviously $f(m,1)\le1$ and it remains to prove that $f(m,n+1)\le f(m,n)$.

Let $r=n+1$ and $s=m+n+1$, one has
$$
\begin{array}{rcl}\frac{f(m,n+1)}{f(m,n)}
&=&\frac{(m+n+1)!}{m!(n+1)!}\cdot\frac{m^m(n+1)^{n+1}}{(m+n+1)^{m+n+1}}\cdot\frac{m!n!}{(m+n)!}\cdot\frac{(m+n)^{m+n}}{m^mn^n}\\
&=&\frac{(n+1)^{n}}{(m+n+1)^{m+n}}\cdot\frac{(m+n)^{m+n}}{n^n}\\
&=&\big(\frac r{r-1}\big)^{r-1}\cdot\big(\frac {s-1}s\big)^{s-1}.\end{array}
$$
Since $r<s$ it remains to prove that the function $g(x)=\big(\frac x{x-1}\big)^{x-1}$ is increasing: this holds because
$$
\begin{array}{rcl}(\log g(x))'&=&\log\big(\frac x{x-1}\big)-\frac 1x\\
&=&-\log(1+t)+t\quad\hbox{with }t=-\frac 1x\\
&\ge&0.\end{array}
$$
\end{proof}

\begin{cor}\label{classical}
For any $m,n,p\in\mathbb N$, $\frac{(m+n+p)!}{m!n!p!}\le\frac{(m+n+p)^{m+n+p}}{m^mn^np^p}$.
\end{cor}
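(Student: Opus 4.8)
The plan is to derive Corollary \ref{classical} from Lemma \ref{bin} by applying the two-variable inequality twice, grouping the three summands in a telescoping way. The key observation is that the right-hand side of Lemma \ref{bin}, namely $\frac{(m+n)^{m+n}}{m^mn^n}$, is multiplicative under such regroupings: the intermediate factors $(m+n)^{m+n}$ will cancel.

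Concretely, first I would apply Lemma \ref{bin} with the pair $(m+n,\,p)$ in place of $(m,n)$, obtaining
$$
\frac{(m+n+p)!}{(m+n)!\,p!}\le\frac{(m+n+p)^{m+n+p}}{(m+n)^{m+n}\,p^p}.
$$
Next I would apply Lemma \ref{bin} directly to the pair $(m,n)$, obtaining
$$
\frac{(m+n)!}{m!\,n!}\le\frac{(m+n)^{m+n}}{m^m\,n^n}.
$$
Multiplying these two inequalities (both sides are positive, so the product of the inequalities is valid) and using the trivial identity $\dfrac{(m+n+p)!}{m!\,n!\,p!}=\dfrac{(m+n+p)!}{(m+n)!\,p!}\cdot\dfrac{(m+n)!}{m!\,n!}$ yields
$$
\frac{(m+n+p)!}{m!\,n!\,p!}\le\frac{(m+n+p)^{m+n+p}}{(m+n)^{m+n}\,p^p}\cdot\frac{(m+n)^{m+n}}{m^m\,n^n}=\frac{(m+n+p)^{m+n+p}}{m^m\,n^n\,p^p},
$$
which is exactly the claimed bound.

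There is essentially no obstacle here: the only thing to watch is that the regrouping is legitimate, i.e.\ that Lemma \ref{bin} applies to the pair $(m+n,p)$, which it does since $m+n\in\mathbb N$; and that all quantities appearing are positive so that multiplying the two inequalities preserves the direction. If one wishes to allow a zero among $m,n,p$ (as is implicitly needed later when $\alpha'''$ may vanish), one adopts the convention $0^0=1$, under which both the trivial identity and Lemma \ref{bin} remain valid, and the argument goes through verbatim. By an obvious induction the same telescoping gives the analogous bound for any finite number of summands, though only the three-variable case is needed.
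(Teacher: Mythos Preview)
Your proof is correct and is essentially identical to the paper's: the paper also factors $\frac{(m+n+p)!}{m!n!p!}=\frac{(m+n+p)!}{(m+n)!p!}\cdot\frac{(m+n)!}{m!n!}$, applies Lemma~\ref{bin} to each factor, and cancels the $(m+n)^{m+n}$.
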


\begin{proof}
Using Lemma \ref{bin},
$$
\begin{array}{rcl}\frac{(m+n+p)!}{m!n!p!}&=&\frac{(m+n+p)!}{(m+n)!p!}\frac{(m+n)!}{m!n!}\\
&\le&\frac{(m+n+p)^{m+n+p}}{(m+n)^{m+n}p^p}\frac{(m+n)^{m+n}}{m^mn^n}=\frac{(m+n+p)^{m+n+p}}{m^mn^np^p}.\end{array}
$$
\end{proof}

\section{Study of a function}

\begin{lem}\label{interior}
Let $a,b\in(0,1]$. The maximum of the function
$$
\varphi(x,y):=\log\Big(\frac{(x+y+z)^{x+y+z}}{x^xy^yz^z}\Big)\quad(\hbox{where }z=1-ax-by)
$$
is attained in the interior of the triangle $T:=\{(x,y)\;:\;x\ge0,y\ge0,z\ge0\}$.
\end{lem}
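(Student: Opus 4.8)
The plan is to combine compactness with a boundary analysis. Since $a,b>0$, the constraints $x\ge0$, $y\ge0$, $z=1-ax-by\ge0$ cut out a non-degenerate triangle $T$ with vertices $(0,0)$, $(1/a,0)$, $(0,1/b)$, so $T$ is compact, and $\varphi$ extends continuously to all of $T$ under the convention $0\log0=0$. Hence $\varphi$ attains a maximum on $T$, and everything reduces to showing that no point of $\partial T$ can be a maximizer.

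First I would record that $\varphi>0$ on the interior of $T$ while $\varphi=0$ at each vertex. Writing $S=x+y+z$ and $p=(x/S,y/S,z/S)$, a direct rearrangement gives $\varphi(x,y)=S\,H(p)$, where $H(p)=-\sum p_i\log p_i$ is the Shannon entropy. In the interior all three coordinates of $p$ lie strictly between $0$ and $1$, so $H(p)>0$ and $\varphi>0$; at each vertex two of $x,y,z$ vanish, so $H(p)=0$ and $\varphi=0$. Consequently the maximum value of $\varphi$ is positive, and in particular it is not attained at a vertex.

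Next I would eliminate the relative interiors of the three edges using the partial derivatives. From $z=1-ax-by$ one gets $S=1+(1-a)x+(1-b)y$, and a short computation yields
$$
\frac{\partial\varphi}{\partial x}=(1-a)\log S-\log x+a\log z,\qquad
\frac{\partial\varphi}{\partial y}=(1-b)\log S-\log y+b\log z,
$$
which are exactly the stationarity equations appearing in the proof of the theorem. On the edge $x=0$ with $0<y<1/b$, the quantities $\log S$ and $\log z$ stay finite, so $\partial\varphi/\partial x\to+\infty$ as $x\to0^+$; hence $\varphi(x,y)>\varphi(0,y)$ for all sufficiently small $x>0$, and $(0,y)$ is not a maximizer. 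The edge $y=0$ is symmetric. For the edge $z=0$, take a point $(x_0,y_0)$ in its relative interior, so $x_0>0$, $y_0>0$, $ax_0+by_0=1$; near this point $b\log z\to-\infty$ dominates the finite terms, so $\partial\varphi/\partial y\to-\infty$, which means that decreasing $y$ slightly --- thereby making $z>0$, i.e.\ moving into the interior of $T$ --- strictly increases $\varphi$; so $(x_0,y_0)$ is not a maximizer either.

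Putting these together, the maximum of $\varphi$ on $T$ is attained neither at a vertex nor on the relative interior of an edge, hence at an interior point, which is the claim. I expect the only delicate point to be making rigorous the passage from ``$\partial_u\varphi\to+\infty$ as $u\to0^+$'' to ``$\varphi$ is strictly larger just inside the edge than on it'': since the blow-up is merely logarithmic, $\int_0^{u_1}\partial_u\varphi\,du$ converges to a positive value for small $u_1>0$, which gives the desired strict inequality, so one only has to check that the chosen one-parameter family stays inside $T$. Everything else is routine.
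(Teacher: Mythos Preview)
Your argument is correct and follows essentially the same route as the paper: both establish existence by compactness/continuity and then rule out the boundary via the blow-up of the partial derivatives $\partial_x\varphi=(1-a)\log S-\log x+a\log z$ and $\partial_y\varphi=(1-b)\log S-\log y+b\log z$ as $x\to0$, $y\to0$, or $z\to0$. The paper organizes this as ``for each fixed $y$ (resp.\ $x$) the one-variable restriction has its maximum in the open segment'', whereas you treat vertices separately via the entropy identity $\varphi=S\,H(p)$ and then handle each edge; the extra entropy remark is a pleasant conceptual addition but not a different method.
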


\begin{proof}$\varphi(x,y)=(x+y+z)\log(x+y+z)-x\log x-y\log y-z\log z$ is continuous on the closed triangle $T$ whose vertices are the origin, the point $A(\frac1a,0)$ and the point $B(0,\frac1b)$, hence it has a maximum on $T$.

\includegraphics[scale=0.3]{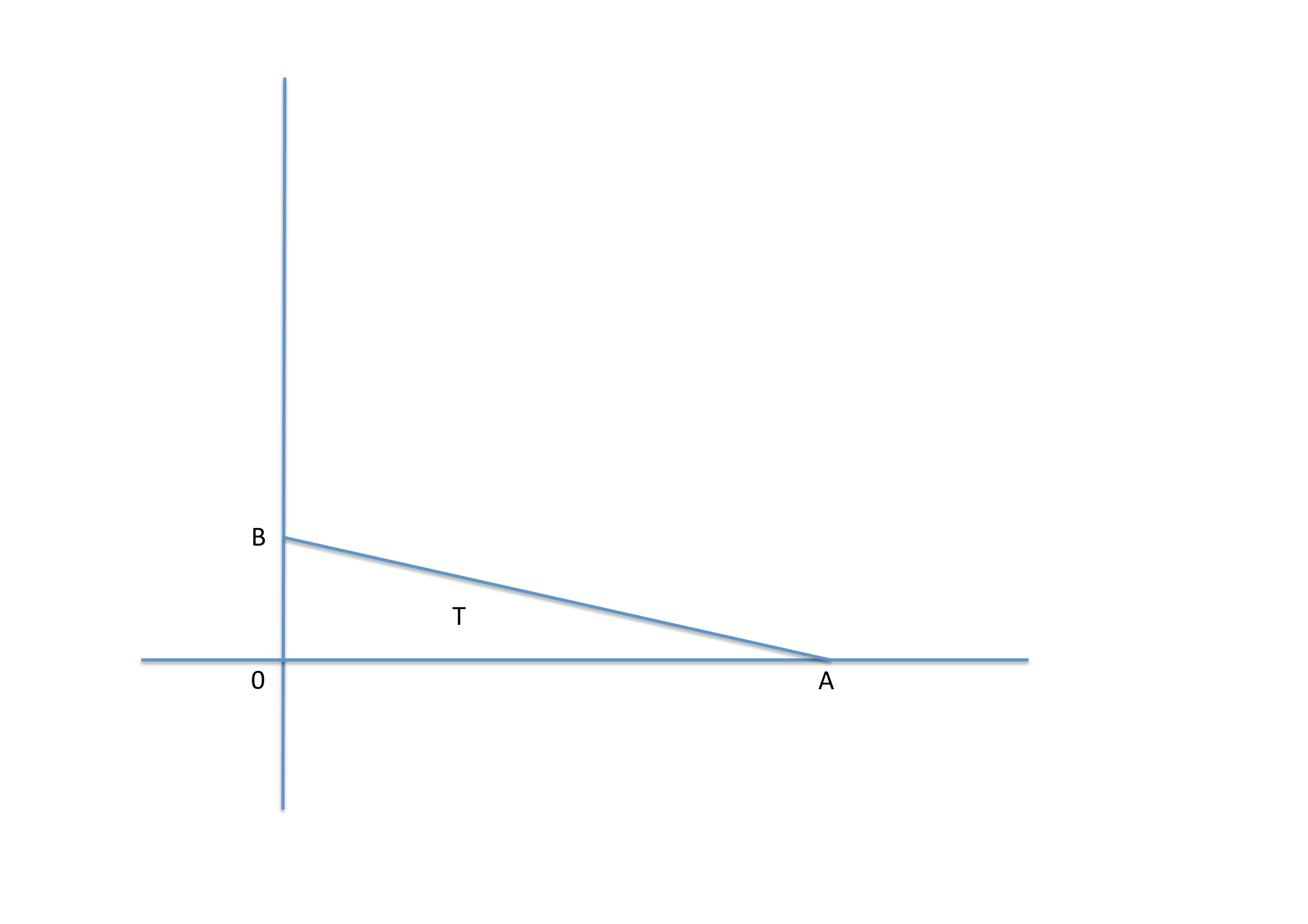}

Notice that $x+y+z\ne0$ on $T$. The partial derivative
$$
\frac{\partial}{\partial x}(\varphi(x,y))=(1-a)\log(x+y+z)-\log x+a\log z
$$
has limits $+\infty$ when $x\to0$ and $-\infty$ when $x\to x_1$, with $x_1$ such that $1-ax_1-by=0$, hence the map $x\mapsto\varphi(x,y)$ increases at the neighborough of $0$ and decreases at the neighborought of $x_1$, it has a maximum in the open interval $(0,x_1)$.

Similarly the map $y\mapsto\varphi(x,y)$ has a maximum in the open interval $(0,y_1)$, with $y_1$ such that $1-ax-by_1=0$.

We deduce that the map $(x,y)\mapsto\varphi(x,y)$ cannot have a maximum in the boundary of $T$.
\end{proof}

\end{document}